\documentclass[a4paper,10pt]{article}      
\usepackage{graphicx}

\usepackage[utf8x]{inputenc}
\usepackage{amssymb,amsmath,amsfonts}
\usepackage{theorem}

\usepackage{tikz}
\usepackage{graphicx}
\usepackage{pgf,tikz}
\usetikzlibrary{arrows}
\pagestyle{empty}
\newcommand{\Z}{\mathbb{Z}}
\newcommand{\F}{\mathbb{F}}

\newtheorem{theorem}{Theorem.}[section]

\newtheorem{lemma}[theorem]{Lemma.}
\newtheorem{definition}[theorem]{Definición.}
\newtheorem{proof}[theorem]{Demostración.}
\newtheorem{remark}[theorem]{Nota.}


\begin{document}

\title{A module structure over maximal consistent sets}

\author{Kevin D\'avila Castellar \and Ismael Guti\'errez Garc\'ia}

\date{Received: date / Accepted: date}

\maketitle

\begin{abstract}
It is shown the construction of a module structure \cite{Roman} with universe over a set of a particular kind of mathematical proofs, the base ring of this module will be built on a maximal consistent extension of a set of propositions, this provides the possibility to do some linear algebra on proofs. It will also be presented an algorithmic proceeding in order to deal with these particular type of deductions.

\end{abstract}

\section{Introduction}
As it is used in propositional calculus, we need to accept the introduction of an elementary set called alphabet, which elements are usually known as letters, and based on this set and some symbols called binary and unary connectives we define the set of formulas as well defined strings of letters and connectives. With the settlement of semantic rules for
this calculus arises the concept of completeness of a set of logical connectives, concerning whether it is possible or not to rewrite any well constructed formula in terms of these connectors.\\
With this last problem in mind we will make the set $\mathcal{A}$ our infinite and enumerable alphabet, and will consider the symbols $\vee$ , $\wedge $ and $\neg $ in the usual way they are defined.
It should be noted that these are in fact a complete set of connectors.

\section{Preliminaries} 

\begin{definition}
 We will start by defining the $n$-levels of propositions as follows:
\begin{align*}
\F_0 & = \mathcal{A}\\
\F_1 & = \{\neg \alpha \mid \alpha \in \F_0 \} \cup \{(\alpha\ast\beta) \mid \alpha,\beta \in \F_0 \}\cup \F_0\\
     & \ \vdots\\
\F_n & = \{\neg \alpha \mid \alpha \in \F_{n-1}\}\cup \{(\alpha\ast\beta) \mid \alpha,\beta \in \F_{n-1}\} \cup \F_{n-1},
\end{align*} 
where $\ast$ runs through our set of connectives \cite{Hinman}.
\end{definition}

As suggested by the previous definition our set of formulas is $\bigcup_{n \in \mathbb{N}_0} \F_n$ and will be denoted by $\F_{(\infty)}$. We will now describe the fundamental concepts of this presentation.

\begin{definition}
The set that will define the propositions of our main system is based on the previous construction of $\F_{(\infty)}$ and it will be given by
\[\F'_{(\infty)} =\  \F_{(\infty)}/\equiv \ = \{\ [\alpha]_{\equiv } \mid \alpha \in \F_{(\infty)}\}.\]
where $\alpha \equiv \beta$ if and only if $\hat{v}(\alpha) = \hat{v}(\beta)$ for all boolean valuation $v: \F_0 \longrightarrow \left\{0,1\right\}$.
\end{definition}

\begin{definition}
The finite conjunction and disjunction of elements from $\F'_{(\infty)}$ will be understood in the next way:
\[\bigwedge_{h\in H} \gamma_h := \left[ \bigwedge_{h \in H} \alpha_h \right]_{\equiv}\]
and
\[\bigvee_{h \in H} \gamma_h := \left[ \bigvee_{h \in H} \alpha_h \right]_{\equiv},\]
where $\gamma_h =[\alpha_h]_{\equiv}$.
Negation is defined analogously. $\F'_{(\infty)}$ with this disjunction, conjunction and negation is usually called the Lindenbaum algebra over $\mathcal{A}$ \cite{Hinman}.
\end{definition}

\begin{definition}
Let $\Sigma \subseteq \F'_{(\infty)} $. A succession in the form $\langle \gamma_i\rangle^n$, with $\gamma_i \in \F'_{(\infty)}$, is called a deduction of $\sigma$ from $\Sigma$ if and only if
\[\gamma_n =\sigma \]
and for each $i \in \{1,...,n\}$, one of the following propositions holds:
\begin{enumerate}
\item[$(a)$] $\gamma_i \in \Sigma$
\item[$(b)$] exists $\gamma \in \Sigma$ such that $\gamma \vdash \gamma_i$
\item[$(c)$] exists $H \subseteq \{\ 1, \ldots ,i-1 \}$ such that $\bigwedge_{h \in H} \gamma_h = \gamma_i$ or $\bigwedge_{h \in H} \gamma_h \vdash \gamma_i$
\item[$(d)$] exists $H \subseteq \{\ 1, \ldots ,i-1 \}$ such that $\bigvee_{h \in H} \gamma_h = \gamma_i$ or $\bigvee_{h \in H} \gamma_h \vdash \gamma_i$,
\end{enumerate}

where the forementioned inferences are referred to the use of the next rules:

\begin{align*}
 \alpha \wedge \beta & \vdash \alpha \\
 \alpha & \vdash \alpha \vee \beta.
\end{align*}
\end{definition}

\begin{theorem}
The existence of any clasical nontrivial deduction can be proved by the construction of a deduction on $\F'_{(\infty)}$ (We call trivial deduction to any proof consisting of just one proposition).
\end{theorem}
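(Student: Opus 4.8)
The plan is to realize any classical deduction inside the four-clause system of the previous definition by transporting it into the Lindenbaum algebra $\F'_{(\infty)}$ and exploiting its order structure. First I would use that a classical deduction is a \emph{finite} object: any such deduction of $\sigma$ from $\Sigma$ invokes only finitely many premises $\gamma_1,\dots,\gamma_k\in\Sigma$, and by the soundness of classical propositional calculus this forces $\{\gamma_1,\dots,\gamma_k\}\models\sigma$. Reading this semantic consequence off in the Lindenbaum algebra $\F'_{(\infty)}$, where conjunction and disjunction are meet and join, it becomes the single algebraic inequality
\[ \bigwedge_{i=1}^{k}[\gamma_i]_{\equiv}\ \le\ [\sigma]_{\equiv}, \]
with $\le$ the partial order of the Boolean algebra. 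This one inequality is what the constructed deduction must discharge, and everything else is bookkeeping.

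Next I would exhibit the deduction explicitly. Listing the classes $[\gamma_1]_{\equiv},\dots,[\gamma_k]_{\equiv}$ as the first $k$ lines is justified immediately by clause $(a)$. Taking $H=\{1,\dots,k\}$, clause $(c)$ then lets me adjoin the line $c:=\bigwedge_{i=1}^{k}[\gamma_i]_{\equiv}$ through its equality alternative. After this it only remains to pass from $c$ to $[\sigma]_{\equiv}$ in one further line, which is where the order relation established above has to be used.

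The crux, and the step I expect to be the main obstacle, is to see that the two primitive rules $\alpha\wedge\beta\vdash\alpha$ and $\alpha\vdash\alpha\vee\beta$, read modulo $\equiv$, already suffice to witness $c\le[\sigma]_{\equiv}$. Here I would invoke the elementary lattice identity that $c\le[\sigma]_{\equiv}$ holds if and only if $c\vee[\sigma]_{\equiv}=[\sigma]_{\equiv}$. Disjunction introduction gives $c\vdash c\vee[\sigma]_{\equiv}$, and since $c\vee[\sigma]_{\equiv}$ and $[\sigma]_{\equiv}$ are the \emph{same} element of $\F'_{(\infty)}$, this is literally $c\vdash[\sigma]_{\equiv}$; applying clause $(d)$ with $H$ the singleton index of $c$ then produces $[\sigma]_{\equiv}$ as the final line and completes a deduction in the required sense. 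The delicate point is entirely that $\vdash$ be well defined on equivalence classes, so that replacing $c\vee[\sigma]_{\equiv}$ by the equal class $[\sigma]_{\equiv}$ is legitimate; once this is checked the degenerate cases settle themselves, namely when $c\equiv\sigma$ (handled directly by the equality option of clause $(c)$) and when $\sigma$ is a tautology (handled by the empty conjunction $H=\emptyset$). Since the construction always contributes at least two lines, the resulting deduction is itself nontrivial, as the statement requires.
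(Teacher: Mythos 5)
Your argument is correct, but it reaches the conclusion by a genuinely different route from the paper. The paper's proof is syntactic and local: it replaces the Hilbert-style axiomatization by an equivalent finite list of inference rules (Modus Ponens, $\wedge$-elimination, $\vee$-introduction, disjunctive syllogism, modus tollens, double negation, \dots) and asserts that each individual rule can be simulated by a deduction on $\F'_{(\infty)}$, so that a classical proof is translated line by line into a deduction of comparable shape and length. You instead go semantic and global: soundness collapses the entire classical deduction into the single Boolean inequality $\bigwedge_{i=1}^{k}[\gamma_i]_{\equiv}\le[\sigma]_{\equiv}$, and your key observation --- that one application of $\alpha\vdash\alpha\vee\beta$ read on equivalence classes witnesses exactly the order relation $\le$ of the Lindenbaum algebra, since $c\le[\sigma]_{\equiv}$ iff $c\vee[\sigma]_{\equiv}=[\sigma]_{\equiv}$ --- turns that inequality into a legitimate application of clause $(d)$. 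This buys a uniform three-stage deduction (premises, their conjunction, the conclusion) independent of the length of the original proof, and it makes transparent \emph{why} the theorem holds: working modulo $\equiv$ makes single-step derivability coincide with semantic consequence, so the system is complete for finite premise sets almost by construction. What it gives up is the faithful, structure-preserving translation the paper's rule-by-rule simulation provides, and it imports soundness of classical propositional calculus as an external ingredient where the paper stays within rule manipulation. The points you flag as delicate (well-definedness of $\vdash$ on classes, the empty-premise/tautology case, nontriviality of the resulting deduction) are the right ones, and your handling of each is adequate given that the paper's own definitions already treat $\vdash$ as a relation on $\F'_{(\infty)}$.
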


\begin{proof}
It is known that the axiomatic standard system of propositional calculus is equivalent to the set of the next rules of inference:
\begin{align*}
        \text{Modus} & \ \text{Ponens}\\
 \alpha \wedge \beta & \vdash \alpha \\
 \alpha \wedge \beta & \vdash \beta\\ 
 			  \alpha & \vdash \alpha \vee \beta\\ 
 			   \beta & \vdash \alpha \vee \beta\\
   \{\alpha, \beta\} & \vdash \beta \vee \alpha\\
  \{\alpha \vee \beta, \neg \alpha\} & \vdash \beta\\
\{\alpha \Rightarrow \beta, \neg \beta\} & \vdash \neg \alpha\\
\alpha & \vdash \neg \neg \alpha\\
\neg \neg \alpha & \vdash \alpha.
\end{align*}
See for example \cite{Caicedo}, for which there are proofs of their respective equivalence classes, this fact can be easily verified. Therefore any process that makes use of the axioms or the MP rule 
has a proof of his equivalence classes in our system and this ends the proof. 
\end{proof}

With this theorem, we are now aware of the sufficiency of our theory.

\section{Construction of the structure}

\begin{definition}
 A map $\phi : \{1,\ldots,n\}  \longrightarrow \mathcal{P} (\{1,\ldots,n\} ) \cup \{0\}$ will be called interpretation or reading of a deduction $\langle \gamma_i\rangle^n$ of $\gamma$ from $\Sigma$ if it satisfies:
\begin{enumerate}
\item[$(a)$] $\phi (i)=0$ if $\gamma_i \in \Sigma$ or exists $\gamma \in \Sigma$ such that $\gamma\vdash \gamma_i$.
\item[$(b)$] $\phi(i)=H\subseteq \{\ 1,\ldots,i-1 \}$ if any of the following conditions is satisfied
	\begin{itemize}
		\item $\bigwedge_{h \in H} \gamma_h = \gamma_i$ or $\bigwedge_{h \in H} \gamma_h \vdash \gamma_i$.
		\item $\bigvee_{h \in H} \gamma_h = \gamma_i \vee \bigvee_{h \in H} \gamma_h \vdash \gamma_i$.
	\end{itemize}
\end{enumerate}
\end{definition}

\begin{definition}
 A series in the form $\langle \gamma_i, \phi(i)\rangle^n$ will be called interpreted deduction if $\langle \gamma_i\rangle^n$ is a deduction of $\gamma_n$ and $\phi$ is an interpretation of this deduction.
\end{definition}

The concept of interpretation is not an artificial one, in fact we will propose an induced interpretation that can be built on any arbitrary deduction.

\begin{definition}
If $\langle \delta_i \rangle^n$ is a deduction, it is known that it has to be in the form  $\langle \delta_i \rangle^n = \langle\langle \delta_i \rangle^{n-1}, \sigma \rangle$, where the series of size $n-1$ is a deduction by itself. We will define then
\[ \Omega^{\sigma}:= \left\{ \left\{ \delta_h \mid h \in H \subseteq \left\{1,\ldots,n-1\right\} \right\} \mid \bigwedge_{h \in H} \delta_h = \sigma \text{ or } \bigwedge_{h \in H} \delta_h \vdash \sigma \text{ or } \bigvee_{h \in H} \delta_h = \sigma \text{ or } \bigvee_{h \in H} \delta_h \vdash \sigma \right\}.\]

$\Omega^\sigma$ is the set constituted by the sets of proposition that result into $\sigma$ by conjunction or disjunction.
\end{definition}

\begin{definition}
 Let us define 
\[\mu : \langle \delta_i \rangle^n\longrightarrow \mathbb{Z}^+ \]
\[\delta_j \longmapsto p_j,\]
where $p_j$ is the $j$-th prime number and $\langle \delta_i \rangle^n$ is a deduction. We define then the next function
\[\Gamma_{\sigma} : \Omega^{\sigma} \longrightarrow \Z^+\]
\[\{\delta_h \mid h \in H \}\ \longmapsto \prod_{h \in H} \mu(\delta_h).\]
This function $\Gamma_\sigma$ assigns the product of their corresponding primes to the sets of propositions that result in $\sigma$ by conjunction or disjunction.
\end{definition}

Finally our interpretation will be given as follows:

\begin{definition}
 If $\langle \delta_i \rangle^n$ is a deduction of $\sigma$ and  $\Omega^\sigma$ is as in the previous definition then:
\[\phi_{in}^{\langle \delta_i \rangle^n} (n) := 0 \ \ \text{if} \ \ \Omega^\sigma=\emptyset.\]
\[\phi_{in}^{\langle \delta_i \rangle^n} (n) := H \ \ \text{if} \ \ \Omega^\sigma \neq \emptyset,\]
where $H$ is such that:
\[\Gamma^\sigma ( \{\delta_h \mid h \in H\})= \textup{max}(\Gamma^\sigma (\Omega^\sigma))\]
i.e. $\phi_{in}$ identifies the element of $\Omega^\sigma$ with the greatest product of associated primes.
\end{definition}

The recursive application of the procedure on the elements indexed by $H$ which is a process that has to be finite because of the finiteness of the deduction, brings as 
a result after the assignment of zero to the indices of the remaining premises the complete definition of our interpretation.\\
However the current idea of deductions allows several ways of redundancy, in the sense that there might be deductions based on the same reasoning but different because of their forms instead of their content.

We will define next a new set of proofs trying to avoid this problem. We emphasize that on the past we made use of the term deduction, this terminology will help us to avoid any confusions.

\begin{definition}
Let $\langle \gamma_i, \phi(i)\rangle^n$ be an interpreted deduction of $\gamma_n$ from $\Sigma$ a subset of $\F'_{(\infty)}$. We define:
\[\quad R(\gamma_u) := 
  \left\{
   \begin{aligned}
    \left\{\emptyset\right\} \text{ if } \phi(u)=0 \\
     \left\{\left\{\gamma_h , R(\gamma_h)\right\} \mid h \in H\right\} \text{ if } \phi(u)=H.
   \end{aligned}
   \right.\]
\[R_u^{\langle \gamma_i, \phi(i)\rangle^n} := \{ \gamma_u, R(\gamma_u)\}.\]
Finally $R$ will be a proof of $\gamma_n$ from $\Sigma$ if $R=R_{n}^{\langle \gamma_i, \phi(i)\rangle^n}$ for some interpreted deduction $\langle \gamma_i, \phi(i)\rangle^n$.
\end{definition}

It is important to realize the dependence between the proof and the interpretation of the deduction that originated it.

This concept of proof creates a relation over the set of deductions as follows:

\begin{definition}
 We call two deductions essentially equal if they are related by $I$ where:
\[\langle \delta_i \rangle^n I \langle \alpha_j\rangle^m : \Leftrightarrow R_n^{\langle \delta_i \rangle^n} = R_m^{\langle \alpha_j\rangle^m}.\]
We have omitted the interpretation here for convenience.
\end{definition}

\begin{remark}
 It is clear that the relation $I$ is an equivalence relation. Also the map that assigns the corresponding proof to an specific deduction is a complete invariant for the relation $I$.
\end{remark}

Hereafter we will use the following notation:
\[\mathcal{M}^{\Sigma} := \left\{ R_{n}^{\langle \beta_i,\phi(i)\rangle^n} \mid \langle \beta_i,\phi(i)\rangle^n \in \mathcal{K}^{\Sigma}\right\},\]
where $\mathcal{K}^\Sigma$ is the set of all deductions from $\Sigma$.

If we now make:
\[\rho := \left\{[\langle\beta_i,\phi(i)\rangle^n]_I \mid \langle\beta_i,\phi(i)\rangle^n \in \mathcal{K}^\Sigma \right\},\]
there is a clear natural bijection between $\mathcal{M}^\Sigma$ and $\rho$ that can be used to move the structure we are defining over $\mathcal{M}^\Sigma$ to $\rho$.\\\\
At this point we have already defined $\phi_{in}$, however for it to be an actual interpretation we need to guarantee the validity of the implication $\phi_{in}(i)=0 \Rightarrow \gamma_i \in \Sigma$. This is the reason that moves us to consider a maximal consistent extension $\Sigma'$ of $\Sigma$ that is known to be closed under deduction and this clearly makes the previous implication true. 
\begin{theorem}\label{3.10}
If we define $\tilde{\vee}$ and $\tilde{l}$ by:
\begin{align*}
 \tilde{\vee} : \Sigma'\times\Sigma'& \longrightarrow \Sigma'\\
 (\alpha,\beta) &\longmapsto \alpha \vee \beta 
\end{align*} 
and
\begin{align*}
 \tilde{l} : \Sigma'\times\Sigma' &\longrightarrow \Sigma'\\
 (\alpha,\beta) &\longmapsto \alpha l \beta :=(\neg \alpha \vee \beta)\wedge (\neg \beta \vee \alpha)
\end{align*}
then the structure $\left(\Sigma', \tilde{l}, \tilde{\vee} \right)$ becomes a commutative ring.
\end{theorem}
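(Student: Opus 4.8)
The first thing I would do is recognise the two operations for what they are: $\tilde{\vee}$ is the join of the Lindenbaum algebra and $\alpha\, l\,\beta=(\neg\alpha\vee\beta)\wedge(\neg\beta\vee\alpha)$ is nothing but the biconditional, which in Boolean form reads $(\alpha\wedge\beta)\vee(\neg\alpha\wedge\neg\beta)$. Because $\F'_{(\infty)}$ is a Boolean algebra, every equality I will need is an identity of Boolean algebras, and by the very definition of $\equiv$ such an identity holds as soon as both sides take the same value under every valuation $v$. So the plan is to verify the ring axioms as Boolean identities on all of $\F'_{(\infty)}$ and then, separately, to check that $\Sigma'$ is closed under the two operations.

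The conceptual engine I would lean on is complementation. A short computation with De~Morgan shows $\neg(\alpha\, l\,\beta)=\neg\alpha\oplus\neg\beta$ and $\neg(\alpha\vee\beta)=\neg\alpha\wedge\neg\beta$, where $\oplus$ denotes symmetric difference; hence the involution $\neg$ is a bijection carrying $(\F'_{(\infty)},\tilde l,\tilde\vee)$ onto the classical Boolean ring $(\F'_{(\infty)},\oplus,\wedge)$. Since the latter is a well known commutative ring, associativity and commutativity of both operations and the distributive law transfer at once, the additive neutral element is $[\top]=\neg[\bot]$, and the relation $\alpha\, l\,\alpha\equiv\top$ shows that every element is its own additive inverse. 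The parts that do not even need this can be seen barehanded: commutativity of $\tilde l$ and $\tilde\vee$ is immediate from the symmetry of their defining formulas, and $\alpha\, l\,\top\equiv\alpha$ identifies the neutral element directly.

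The only genuinely $\Sigma'$-dependent point is that the maps really land in $\Sigma'$, and here I would use that $\Sigma'$ is maximal consistent and closed under deduction. Closure under $\tilde\vee$ is free from the inference rule $\alpha\vdash\alpha\vee\beta$: if $\alpha\in\Sigma'$ then $\alpha\vee\beta\in\Sigma'$. For $\tilde l$ I would argue through the standard membership laws of a maximal consistent set: from $\beta\in\Sigma'$ one gets $\neg\alpha\vee\beta\in\Sigma'$ and from $\alpha\in\Sigma'$ one gets $\neg\beta\vee\alpha\in\Sigma'$, and since such a set is closed under conjunction their meet $\alpha\, l\,\beta$ again lies in $\Sigma'$. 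Together with $[\top]\in\Sigma'$ and self-invertibility, this says $\Sigma'$ is closed under addition, negation and multiplication, so it is a subring of $(\F'_{(\infty)},\tilde l,\tilde\vee)$ and therefore a commutative ring in its own right; under $\neg$ it corresponds to the maximal ideal complementary to the ultrafilter, which re-explains why the structure survives the restriction.

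I expect the distributive law $\alpha\vee(\beta\, l\,\gamma)\equiv(\alpha\vee\beta)\, l\,(\alpha\vee\gamma)$ to be the main obstacle, since it is the single axiom in which the two operations interact; checked by hand it needs the case split on the value of $\alpha$ (if $\alpha$ is true both sides equal $\top$, and if $\alpha$ is false both collapse to $\beta\, l\,\gamma$), whereas the isomorphism route absorbs it into the distributivity of the Boolean ring---which is exactly why I would present the argument through $\neg$. One caveat I would flag is the unit: a multiplicative identity for $\tilde\vee$ would have to be $[\bot]$, which is inconsistent and hence absent from $\Sigma'$, so the resulting structure is a commutative ring without~$1$ unless $[\bot]$ is formally adjoined.
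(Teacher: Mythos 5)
Your proof is correct, and it reaches the conclusion by a genuinely different route from the paper's. The paper verifies the ring axioms directly at the level of $\equiv$: it exhibits $[\beta\vee\neg\beta]_{\equiv}$ as the neutral element for $\tilde l$, notes $\alpha\,l\,\alpha\equiv\beta\vee\neg\beta$ to get involutions, declares associativity and commutativity ``easily verified,'' checks the single distributive identity $\alpha\vee(\beta\,l'\,\gamma)\equiv(\alpha\vee\beta)\,l'\,(\alpha\vee\gamma)$, and disposes of well-definedness on $\Sigma'$ in one clause (``clearly\ldots because of the maximal consistency''). You instead observe that $\neg$ is an involution intertwining $(\tilde l,\tilde\vee)$ with $(\oplus,\wedge)$, so all the axioms are inherited at once from the standard Boolean ring on $\F'_{(\infty)}$; this buys you the associativity of $l$ (the one computation the paper waves at and which is genuinely tedious by hand) for free, and it explains structurally why $\Sigma'$ survives the restriction, namely as the $\neg$-image of the maximal ideal complementary to the ultrafilter. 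You are also more explicit than the paper on the two points it treats most lightly: the deductive-closure argument that $\Sigma'$ is actually closed under $\tilde\vee$ and $\tilde l$ and contains the additive zero $[\top]$, and the fact that the multiplicative unit for $\tilde\vee$ would be $[\bot]\notin\Sigma'$, so the result is a commutative ring without identity --- a caveat the paper only concedes in the remark immediately following the theorem, where it invokes the embedding into a unital ring to produce $e^{\vee}$. Both arguments are sound; yours is the more economical and the more self-critical of the two.
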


\begin{proof}
$\left(\Sigma',\tilde{l}\right)$ is an abelian group. Indeed $l$ is clearly a well defined binary operation because of the maximal consistency of $\Sigma'$. Besides  if $\alpha, \beta \in \F_{(\infty)}$ we have:
\[\alpha l' (\beta \vee \neg \beta) \equiv \alpha,\]
where $l'$ is the binary connective corresponding to $l$ in $\F_{(\infty)}$. We can deduce from this that:
\[ [\alpha]_{\equiv} l [\beta \vee \neg \beta]_{\equiv} = [\alpha]_{\equiv}\]
therefore
\[\gamma l [\beta \vee \neg \beta]_{\equiv} = \gamma \textup{ for all } \gamma \in \F'_{(\infty)} \]
and given that
\[\alpha l' \alpha \equiv \beta \vee \neg \beta\]
we have
\[ [\alpha]_{\equiv} l [\alpha]_{\equiv}=[\beta \vee \neg \beta]_{\equiv}\]
i.e $\gamma l \gamma = [\beta \vee \neg \beta]_{\equiv}$ for all $\gamma \in \F'_{(\infty)}$. Finally associativity and commutativity can be easily verified and we have now the desired result.

Thus following the same reasoning is evident that $\left(\Sigma', \tilde{\vee}\right)$ is a commutative semigroup. 
Besides if $\alpha, \beta,\gamma \in \F_{(\infty)}$ then:
\[\alpha \vee (\beta l' \gamma) \equiv (\alpha \vee \beta) l' (\alpha \vee \gamma)\]
in this way
\[[\alpha]_{\equiv} \vee \left([\beta]_{\equiv} l [\gamma]_{\equiv}\right) = \left([\alpha]_{\equiv} \vee [\beta]_{\equiv} \right) l \left([\alpha]_{\equiv} \vee [\gamma]_{\equiv}\right)\]
and this clearly ends the proof.
\end{proof}

\begin{remark}
It is a well known fact from Ring Theory that every ring can be embedded in a ring with identity, by doing this we can find a multiplicative neutrum for $\tilde{\vee}$ that will be denoted by $e^{\vee}$.\\
\end{remark}

We say a proof is less forced than other when the set of premises of the first is smaller than the one of the second.

With this in mind we define what will be called the sum of the module. It is necessary to clarify first, that on the purpose of the development of our theory the proofs with the form  
\[\left\{[\alpha \vee \neg \alpha]_{\equiv}, \Delta\right\}\] 
with $\alpha \in \Sigma'$ will be equivalent to 
\[\left\{[\alpha \vee \neg \alpha]_{\equiv}, \left\{\emptyset\right\}\right\}.\] 
This is something that could perfectly has been done in the very definition of proof or could have been added to the operations that we are about to define.

\begin{definition}
 Let $\left\{\zeta_1, \Delta_1\right\}$ and $\left\{\zeta_2, \Delta_2\right\}$ be proofs of $\zeta_1 \text{and} \zeta_2$ respectively and $\alpha \in \mathbb{F'}_{(\infty)}$, we will define a function on $\left(\Delta_1,\Delta_2\right)$ as follows:
 \[(\Delta_1*\Delta_2)_{\alpha} :=
       \left\{
        	\begin{aligned}
         	\{\emptyset\}, & \text{ if } \Delta_1 = \Delta_2  \text{ and $\alpha$ is a tautology }\\
          	\Delta_1, & \text{ if } \Delta_1 = \Delta_2 \text{ and $\alpha$ is not a tautology }\\
           	\Delta_j, & \text{ if } \Delta_i = \{\emptyset\} \text{ for } i,j\in \{1, 2\} i \neq j\\
            \Delta, & \text{ in any other case }
            \end{aligned}
       \right.\]
where
\[\Delta =
       \left\{
        \begin{aligned}
         (\Delta_1-\Delta_2) \cup (\Delta_2-\Delta_1), & \text{ if } \bigwedge ((\Delta_1-\Delta_2) \cup 
         (\Delta_2-\Delta_1)\cap \mathbb{F'}_{(\infty)}) \vdash \zeta_1 \wedge \zeta_2 \\
         \{\emptyset\}, & \text{ otherwise }.
        \end{aligned}
       \right.\]
\end{definition}

\begin{definition}
The next function will be known from now on as the summation inside the module:
\[+ : \mathcal{K}_i^{\Sigma'} \times \mathcal{M}^{\Sigma'} \longrightarrow \mathcal{M}^{\Sigma'}\]
\[(R_1,R_2)\longmapsto R,\]
where $R$ will be described next:

As the proofs $R_1$ and $R_2$ have to be in the form:
\[R_1 = \{\zeta_1, \Delta_1\}\]
\[R_2 = \{\zeta_2, \Delta_2\}\]
with $\zeta_1, \zeta_2 \in \F'_{(\infty)}$, we will do:
\[R := \{\zeta_1 l \zeta_2, (\Delta_1*\Delta_2)_{\zeta_1 l \zeta_2}\}.\]
\end{definition}

\begin{lemma}
 The map previously described is precisely a well defined binary operation.
\end{lemma}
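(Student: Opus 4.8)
The plan is to verify the three conditions that any map must satisfy to be a well-defined binary operation: that it assigns a value to every admissible pair, that this value is unique, and that it actually lands in the codomain $\mathcal{M}^{\Sigma'}$. The first two conditions are purely combinatorial and follow by inspecting the definition of $(\Delta_1 * \Delta_2)_{\alpha}$, while the third is the substantive point and amounts to showing that the output object is genuinely a proof.

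First I would establish totality and single-valuedness. Given two input proofs $R_1 = \{\zeta_1, \Delta_1\}$ and $R_2 = \{\zeta_2, \Delta_2\}$, the components $\zeta_1, \zeta_2, \Delta_1, \Delta_2$ are uniquely recovered, since within each proof the two entries are distinguished by their type (an element of $\F'_{(\infty)}$ versus a set of sub-proofs), and then $\zeta_1 l \zeta_2$ is determined. The four clauses defining $*$ exhaust every configuration of $(\Delta_1,\Delta_2)$: either $\Delta_1 = \Delta_2$ (clauses one and two, separated by whether $\zeta_1 l \zeta_2$ is a tautology), or exactly one of the two equals $\{\emptyset\}$ (clause three), or neither of these holds (clause four). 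Where clauses overlap, which happens precisely when $\Delta_1 = \Delta_2 = \{\emptyset\}$, each applicable clause returns $\{\emptyset\}$, so no ambiguity arises and $+$ is single-valued.

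Next I would prove closure into $\mathcal{M}^{\Sigma'}$, which is the heart of the statement. For the conclusion coordinate, since $\zeta_1,\zeta_2 \in \Sigma'$ and Theorem \ref{3.10} makes $\left(\Sigma', \tilde{l}, \tilde{\vee}\right)$ a ring, $\Sigma'$ is closed under $l$, whence $\zeta_1 l \zeta_2 \in \Sigma'$. It then remains to exhibit, in each clause of $*$, an interpreted deduction of $\zeta_1 l \zeta_2$ from $\Sigma'$ whose induced proof object under $\phi_{in}$ is exactly $R = \{\zeta_1 l \zeta_2, (\Delta_1 * \Delta_2)_{\zeta_1 l \zeta_2}\}$. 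When the second coordinate is $\{\emptyset\}$ (the tautology subcase, the otherwise-branch of clause four, and the degenerate overlaps), the object is the trivial proof $\{\zeta_1 l \zeta_2, \{\emptyset\}\}$, which is legitimate because $\Sigma'$ is maximal consistent and hence closed under deduction, so $\zeta_1 l \zeta_2$ may be taken directly as a premise. When the second coordinate inherits a non-trivial $\Delta$ (clauses two and three, and the main branch of clause four), I would build the realizing deduction by concatenating the sub-deductions recorded in $\Delta$ and appending $\zeta_1 l \zeta_2$ as a final step, justified by the inference rules of the deduction definition together with the hypothesis $\bigwedge((\Delta_1 - \Delta_2) \cup (\Delta_2 - \Delta_1)) \vdash \zeta_1 \wedge \zeta_2$ in clause four, and by the fact that the surviving premise set already deduces $\zeta_1$ (equivalently $\zeta_2$) in clauses two and three.

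The main obstacle I anticipate is precisely this realization step in clause four: one must check that the propositions appearing in the symmetric difference $(\Delta_1 - \Delta_2) \cup (\Delta_2 - \Delta_1)$ are themselves conclusions of valid sub-proofs, which they are, being inherited unchanged from $R_1$ and $R_2$, and that the passage from $\zeta_1 \wedge \zeta_2$ to $\zeta_1 l \zeta_2$ is admissible under the permitted conjunction and disjunction rules, so that the recursively defined tree $\{\zeta_1 l \zeta_2, \Delta\}$ really is the image of an honest interpreted deduction rather than a merely formal set. Closing this loop between the set-level operation $*$ and the existence of a realizing deduction is where the argument must be most careful; once it is secured, one concludes $R \in \mathcal{M}^{\Sigma'}$ and the lemma follows.
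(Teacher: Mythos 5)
Your overall strategy coincides with the paper's: both arguments reduce the lemma to exhibiting, for the output $R=\{\zeta_1 l \zeta_2,(\Delta_1*\Delta_2)_{\zeta_1 l \zeta_2}\}$, an honest interpreted deduction realizing it, and both locate the difficulty in the passage from $\zeta_1\wedge\zeta_2$ to $\zeta_1 l \zeta_2$. Your discussion of totality and single-valuedness of $*$ is extra material the paper omits, and it is fine as far as it goes.

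The gap is that you name the crucial step and then leave it as an assertion. Since the only inference rules admitted in a deduction are $\alpha\wedge\beta\vdash\alpha$ and $\alpha\vdash\alpha\vee\beta$, it is not automatic that one may ``append $\zeta_1 l \zeta_2$ as a final step'' after having deduced $\zeta_1\wedge\zeta_2$: neither rule produces a conjunction of disjunctions such as $(\neg\zeta_1\vee\zeta_2)\wedge(\neg\zeta_2\vee\zeta_1)$. What makes it work, and what constitutes the entire content of the paper's proof, is that one operates on equivalence classes in $\F'_{(\infty)}$, so it suffices to find a representative of the class $\zeta_1 l \zeta_2$ having the syntactic shape $(\zeta_1\wedge\zeta_2)\vee\beta$, to which $\alpha\vdash\alpha\vee\beta$ applies. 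The paper's witness is
\[
\zeta_1\wedge\zeta_2\ \vdash\ (\zeta_1\wedge\zeta_2)\vee(\neg\zeta_1\wedge\zeta_1)\vee((\zeta_2\vee\neg\zeta_1)\wedge\neg\zeta_1),
\]
together with the claim that the right-hand side lies in the class $\zeta_1 l \zeta_2$. Your phrase ``admissible under the permitted conjunction and disjunction rules'' is precisely what must be checked, not assumed, so you need to supply such a disjunct explicitly. (If you do, note that the paper's own right-hand side actually evaluates to $[\neg\zeta_1\vee\zeta_2]_{\equiv}$ rather than to the biconditional --- take $\zeta_1$ false and $\zeta_2$ true --- so a corrected witness such as $(\zeta_1\wedge\zeta_2)\vee(\neg\zeta_1\wedge\neg\zeta_2)$ is what your argument should use; that defect belongs to the source, but your proof cannot be closed without confronting it.)
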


\begin{proof}
We have clearly
\[R' := \{\zeta_1 \wedge \zeta_2, (\Delta_1*\Delta_2)_{\zeta_1 l \zeta_2}\} \in \mathcal{K}_i^{\Sigma'}\]
and given that  
\[\zeta_1 \wedge \zeta_2 \vdash (\zeta_1\wedge \zeta_2)\vee (\neg \zeta_1 \wedge \zeta_1) \vee ((\zeta_2 \vee \neg \zeta_1) \wedge \neg \zeta_1)\]
and the fact that this expression is equal to $\zeta_1 l \zeta_2$, we easily get the desired result.
\end{proof}

\begin{theorem}\label{3.13}
 $\left(\mathcal{M}^{\Sigma'},+\right)$ is an abelian group.
\end{theorem}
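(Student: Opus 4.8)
The plan is to verify the abelian-group axioms—associativity, a neutral element, inverses, and commutativity—reducing each one to the behaviour already established on the first coordinate. Recall that by Theorem \ref{3.10} the pair $(\Sigma',\tilde{l})$ is an abelian group whose neutral element is the tautology class $[\alpha\vee\neg\alpha]_{\equiv}$ and in which every element is its own inverse, since $\gamma l \gamma=[\alpha\vee\neg\alpha]_{\equiv}$. Because the first coordinate of a sum $R_1+R_2=\{\zeta_1 l \zeta_2,(\Delta_1*\Delta_2)_{\zeta_1 l \zeta_2}\}$ is governed entirely by $\tilde{l}$, each axiom splits into a statement about that first coordinate (immediate from Theorem \ref{3.10}) together with a verification on the second coordinate through the piecewise map $*$. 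Closure is already available: it is precisely the content of the preceding Lemma.

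For the neutral element I would take the trivial proof of a tautology, $E:=\{[\alpha\vee\neg\alpha]_{\equiv},\{\emptyset\}\}$, which lies in $\mathcal{M}^{\Sigma'}$ because $\Sigma'$ is closed under deduction and hence contains every tautology. Writing $R=\{\zeta,\Delta\}$, the first coordinate of $R+E$ is $\zeta l [\alpha\vee\neg\alpha]_{\equiv}=\zeta$, and the second is $(\Delta*\{\emptyset\})_{\zeta}$, which a direct inspection of the definition of $*$ shows to equal $\Delta$ in every case: if $\Delta=\{\emptyset\}$ the first two clauses return $\{\emptyset\}=\Delta$, and if $\Delta\neq\{\emptyset\}$ the clause for a factor equal to $\{\emptyset\}$ returns the non-trivial factor $\Delta$. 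Hence $R+E=R$. For inverses I would show that each $R=\{\zeta,\Delta\}$ is self-inverse: in $R+R$ the first coordinate is $\zeta l \zeta=[\alpha\vee\neg\alpha]_{\equiv}$, a tautology, while the second coordinate is $(\Delta*\Delta)_{\zeta l \zeta}$, and since the two factors coincide and the subscript is a tautology the first clause of $*$ yields $\{\emptyset\}$; thus $R+R=E$.

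Commutativity is similarly direct. On the first coordinate $\zeta_1 l \zeta_2=\zeta_2 l \zeta_1$ by Theorem \ref{3.10}, so in particular the subscript of $*$ is unchanged when the summands are swapped. On the second coordinate it then suffices to note that each clause defining $(\Delta_1*\Delta_2)_\alpha$ is symmetric in the pair $(\Delta_1,\Delta_2)$: the test $\Delta_1=\Delta_2$, the test that one factor equals $\{\emptyset\}$, and the symmetric difference $(\Delta_1-\Delta_2)\cup(\Delta_2-\Delta_1)$ of the last clause are all invariant under interchange. Consequently $R_1+R_2=R_2+R_1$.

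The main obstacle is associativity, and specifically its second coordinate. On the first coordinate the equality $(\zeta_1 l \zeta_2) l \zeta_3=\zeta_1 l (\zeta_2 l \zeta_3)$ is immediate from Theorem \ref{3.10}, so the two outer subscripts of $*$ coincide. The second coordinate is delicate for two reasons. First, the inner subscripts differ between the bracketings, being $\zeta_1 l \zeta_2$ on one side and $\zeta_2 l \zeta_3$ on the other, so the branch of $*$ selected at the intermediate step (tautology versus not, a factor equal to $\{\emptyset\}$ versus not) need not be the same. Second, the derivability side-condition inside the generic clause refers to the conjunction of the two propositions being combined, and these targets are genuinely different across bracketings, namely $(\zeta_1 l \zeta_2)\wedge\zeta_3$ against $\zeta_1\wedge(\zeta_2 l \zeta_3)$, so the two computations can a priori fall on opposite sides of that condition. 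The plan is to exploit that on the generic clause $*$ is simply the symmetric difference of premise sets, which is associative with $\{\emptyset\}$ as neutral element and every set self-inverse, so that the desired equality holds at once whenever all three combinations remain in the generic branch; it then remains to reconcile the exceptional branches. I would organize this as an exhaustive case analysis according to which of $\zeta_1,\zeta_2,\zeta_3$ are pairwise equivalent—equivalently, which $l$-combinations are tautologies—and according to which of the $\Delta_i$ equal $\{\emptyset\}$, comparing the two bracketings case by case, with the convention identifying $\{[\alpha\vee\neg\alpha]_{\equiv},\Delta\}$ with $\{[\alpha\vee\neg\alpha]_{\equiv},\{\emptyset\}\}$ collapsing exactly those branches in which an intermediate proposition becomes tautological. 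Reconciling the differing derivability targets across the two bracketings is the step I expect to be the real difficulty, and the one demanding the most care.
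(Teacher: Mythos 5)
Your proposal follows essentially the same route as the paper: closure from the preceding lemma, commutativity from the symmetry of $\tilde{l}$ and of $*$, the trivial tautology proof $\{[\alpha\vee\neg\alpha]_{\equiv},\{\emptyset\}\}$ as the neutral element, and every element as its own inverse via the tautology clause of $*$. Those verifications are correct and are in fact spelled out in more detail than in the paper, which only asserts them. The one step you leave open --- associativity on the second coordinate, where the intermediate subscripts and the derivability side-conditions $(\zeta_1 l \zeta_2)\wedge\zeta_3$ versus $\zeta_1\wedge(\zeta_2 l \zeta_3)$ differ between bracketings --- is precisely the step the paper also does not carry out; it says only that associativity ``can be easily verified.'' So your argument is no weaker than the published one, and your diagnosis of where the genuine work lies (reconciling the branch selection of $*$ across the two bracketings) is the accurate account of what a complete proof would still have to supply.
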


\begin{proof}
It follows from previous lemma that this structure is a magma. The commutativity is inherited from $\left(\Sigma',\tilde{l}\right)$ and the commutativity of *. Associativity can be easily verified. Besides, it is clear that 
\[\{[\alpha \vee \neg \alpha]_{\equiv}, \{\emptyset\}\}\] 
is a neutral element for the operation and the elements of the structure are involutions, this fact can be checked from the definition, this ends the proof.
\end{proof}

As it can be seen this operation farther of building a proof for $\zeta_1 l \zeta_2$, diminishes, if it is possible, the number of unjustified (non-deduced) premises. Although this operation may seem weirdly defined it appears naturally when we are trying to reuse the justifications from the proofs we are trying to add.

We will use from now on, as usual, the following notation.
\[+ (R_1,R_2) = R_1+R_2.\]
Let us now define the scalar product.

\begin{definition}
 The next map will be known from now on as the scalar product:
\[\cdotp  : \Sigma' \times \mathcal{M}^{\Sigma'} \longrightarrow \mathcal{M}^{\Sigma'}\]
\[(\sigma, R) \longmapsto R'.\]
The proof $R'$ will be built based on $R$ and $\{\sigma, \{\emptyset\}\}$. We already know that $R$ has to be in the form
\[R = \{\zeta, \Delta\} \text{ where } \zeta \in \F'_{(\infty)}\] 
then
\[R' := \{\sigma \vee \zeta, \Delta\}.\]
We shall write, as usual $R'=\sigma \cdotp R$.
\end{definition}

\begin{remark}
The image of the previous function is precisely a subset of $\mathcal{M}^{\Sigma'}$.
\end{remark}

\begin{remark}
One can see on the other hand that the next map is an injection.
\[\psi : \Sigma' \longrightarrow \mathcal{M}^{\Sigma'}\]
\[ \gamma \longmapsto \{\gamma, \{\emptyset\}\}.\]
So our scalar product could perfectly has been considered an operation between proofs.
\end{remark}

\begin{lemma}\label{3.16}
In the structure $\left(\mathcal{M}^{\Sigma'},+,\cdotp\right)$ next properties are satisfied:
\begin{align}
(\theta \vee \beta)\cdotp R & = \theta \cdotp (\beta\cdotp R) \text{  for  } \theta, \beta \in \Sigma' \text{ and } R \in \mathcal{M}^{\Sigma'}\\
e^{\vee}\cdotp R & = R \text{  for } R \in \mathcal{M}^{\Sigma'}\\
\alpha \cdotp (R_1 + R_2) & = \alpha \cdotp R_1 + \alpha \cdotp R_2 \text{  for } \alpha \in \Sigma' \text{ and } R_1, R_2 \in \mathcal{M}^{\Sigma'}\\
(\alpha l \beta)\cdotp R & = \alpha \cdotp R + \beta \cdotp R \text{  for } \alpha, \beta \in \Sigma' \text{ and } R \in \mathcal{M}^{\Sigma'}.
\end{align}
\end{lemma}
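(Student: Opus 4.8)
The plan is to verify the four identities coordinatewise. Every proof occurring in Lemma \ref{3.16} is of the normal form $\{\zeta,\Delta\}$, with conclusion $\zeta\in\F'_{(\infty)}$ and premise set $\Delta$, and the scalar product acts by $\sigma\cdotp\{\zeta,\Delta\}=\{\sigma\vee\zeta,\Delta\}$; that is, it alters only the conclusion (through $\tilde{\vee}$) and carries $\Delta$ along unchanged. So for each identity I would first check that the two conclusions coincide in $\F'_{(\infty)}$, and then that the two premise sets coincide in $\mathcal{M}^{\Sigma'}$.

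For the conclusions everything reduces to the ring axioms of $(\Sigma',\tilde{l},\tilde{\vee})$ proved in Theorem \ref{3.10}. For $(1)$ I would use associativity of $\vee$ to get $(\theta\vee\beta)\vee\zeta=\theta\vee(\beta\vee\zeta)$; for $(2)$ the identity law $e^{\vee}\vee\zeta=\zeta$; for $(3)$ the distributive law $\alpha\vee(\zeta_1 l\zeta_2)=(\alpha\vee\zeta_1)l(\alpha\vee\zeta_2)$; and for $(4)$ distributivity together with commutativity,
\[(\alpha l\beta)\vee\zeta=\zeta\vee(\alpha l\beta)=(\zeta\vee\alpha)l(\zeta\vee\beta)=(\alpha\vee\zeta)l(\beta\vee\zeta).\]
This settles the conclusion coordinate of all four identities immediately.

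For the premise coordinate, identities $(1)$ and $(2)$ are trivial, since $\Delta$ is transported unchanged on both sides. In $(4)$ the two summands on the right share the premise $\Delta$, so the relevant value is $(\Delta*\Delta)_{(\alpha\vee\zeta)l(\beta\vee\zeta)}$; because $\Delta_1=\Delta_2$, the operation $*$ returns $\{\emptyset\}$ exactly when its subscript is a tautology and $\Delta$ otherwise, and I would reconcile this with the left-hand premise $\Delta$ using the convention (stated before Theorem \ref{3.13}) that a proof whose conclusion is a tautology is identified with the one carrying premise $\{\emptyset\}$: when $(\alpha\vee\zeta)l(\beta\vee\zeta)$ is a tautology both sides collapse to $\{\emptyset\}$, and otherwise both equal $\Delta$. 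In $(3)$ I would run the case analysis in the definition of $*$: the branches ``$\Delta_1=\Delta_2$'' and ``$\Delta_i=\{\emptyset\}$'' depend only on $\Delta_1,\Delta_2$, which are the same on both sides, so the same branch is selected; inside the branch $\Delta_1=\Delta_2$ the only possible discrepancy is the tautology test, applied to $\zeta_1 l\zeta_2$ on the left and to $(\alpha\vee\zeta_1)l(\alpha\vee\zeta_2)$ on the right, and since these two conclusions already coincide by distributivity, the collapse convention again reconciles the outputs whenever the tests disagree.

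The hard part will be the \emph{otherwise} case of $*$ in identity $(3)$. There both sides return the same symmetric difference $(\Delta_1-\Delta_2)\cup(\Delta_2-\Delta_1)$, but guarded by different deducibility conditions: $\bigwedge(\cdots)\vdash\zeta_1\wedge\zeta_2$ on the left, versus $\bigwedge(\cdots)\vdash(\alpha\vee\zeta_1)\wedge(\alpha\vee\zeta_2)$ on the right. Since $\zeta_1\wedge\zeta_2$ entails $(\alpha\vee\zeta_1)\wedge(\alpha\vee\zeta_2)$, the left condition implies the right, so whenever the left output is the symmetric difference so is the right; the delicate direction is when the right guard holds but the left fails, giving symmetric difference on the right against $\{\emptyset\}$ on the left. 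I would isolate this as the single nontrivial verification and attack it by combining the monotonicity of $\vdash$ under $\vee$-weakening with the tautology-collapse convention, arguing that precisely in that configuration the common conclusion $(\alpha\vee\zeta_1)l(\alpha\vee\zeta_2)$ forces a collapse, so that the two premise sets agree after the identification; if that implication does not hold unconditionally, this case is exactly where the definition of $*$ would need to be tightened.
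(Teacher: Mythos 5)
Your plan is the same computational route the paper takes: split each identity into its conclusion coordinate, which reduces to the ring identities of Theorem \ref{3.10} (associativity and identity for $\tilde{\vee}$, distributivity of $\tilde{\vee}$ over $\tilde{l}$), and its premise coordinate, which is a case analysis on the definition of $*$. Your treatment of $(1)$, $(2)$ and $(4)$ matches the paper's essentially verbatim, including the observation that in $(4)$ one has $\Delta_1=\Delta_2=\Delta$, so $*$ lands in its first two branches and the tautology case is absorbed by the collapse convention.

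Where you differ is in $(3)$, and there you are in fact more careful than the paper. The paper's proof of $(3)$ carries the subscript $\delta_1 l \delta_2$ through the whole chain of equalities and then simply declares the final expression equal to $\alpha\cdotp R_1+\alpha\cdotp R_2$; it never checks that $(\Delta_1*\Delta_2)_{\delta_1 l \delta_2}$ agrees with $(\Delta_1*\Delta_2)_{(\alpha\vee\delta_1) l (\alpha\vee\delta_2)}$, nor that the $\vdash$-guards in the \emph{otherwise} branch of $*$ (which reference the conclusions $\zeta_1\wedge\zeta_2$ of the two summands, and hence change from $\delta_1\wedge\delta_2$ on the left to $(\alpha\vee\delta_1)\wedge(\alpha\vee\delta_2)$ on the right) select the same output. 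The asymmetry you isolate is real: since $\delta_1\wedge\delta_2\vdash(\alpha\vee\delta_1)\wedge(\alpha\vee\delta_2)$ but not conversely, one can have the symmetric difference entailing, say, $\alpha$ alone, so that the right-hand guard holds while the left-hand one fails, producing $\{\emptyset\}$ on one side and a nonempty premise set on the other under a common conclusion that is not a tautology; the collapse convention therefore does not rescue this case, contrary to the hope expressed at the end of your proposal. So your one unresolved case is not a defect of your plan relative to the paper --- it is a genuine gap that the paper's own proof silently skips, and your diagnosis that the definition of $*$ (or the guard in its \emph{otherwise} branch) needs tightening for $(3)$ to hold as stated is the correct conclusion to draw.
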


\begin{proof}
\begin{itemize}
\item[(1)]Follows from scalar product definition.
 
\item[(2)] As $R$ has to be in the form $R=\{\delta, \Delta\}$ with $\delta \in \Sigma'$, then $e^{\vee}\cdotp \{\delta, \Delta\} = \{e^{\vee} \vee \delta, \Delta\} = \{\delta, \Delta\}$, what ends the proof.

\item[(3)] As $R_1$ and $R_2$ has to be in the form $R_1 = \{\delta_1,\Delta_1\}$ and $R_2 = \{\delta_2,\Delta_2\}$ with $\delta_1, \delta_2 \in \Sigma'$, then
\begin{align*}
\alpha\cdotp (R_1+R_2) & = \alpha \cdotp \{\delta_1 l \delta_2, (\Delta_1*\Delta_2)_{\delta_1 l \delta_2}\}\\
                       & = \{(\delta_1 l \delta_2) \vee \alpha, (\Delta_1*\Delta_2)_{\delta_1 l \delta_2}\}\\
                       & = \{(\delta_1\vee \alpha) l (\delta_2\vee \alpha), (\Delta_1*\Delta_2)_{\delta_1 l \delta_2} \}\\
                          & = \alpha \cdotp R_1 + \alpha \cdotp R_2.
\end{align*}
\item[(4)] It is known that 
\[(\alpha l \beta)\cdotp R = \{(\alpha l \beta) \vee \sigma, \Delta\},\] 
where we have previously assumed that $R=\left\{\sigma, \Delta\right\}$ with $\sigma \in \Sigma'$. In the case $\left(\alpha l \beta\right) \vee \sigma$ be a tautology the result is immediate, otherwise one has:
\begin{align*}
\{(\alpha l \beta) \vee \sigma, \Delta\} & = \{(\alpha \vee \sigma) l (\beta \vee \sigma), (\Delta*\Delta)_{(\alpha \vee \sigma) l (\beta \vee \sigma)}\}\\
                                         & = \{\alpha \vee \sigma, \Delta\} + \{\beta \vee \sigma, \Delta\}
\end{align*}
and one has the desired result.
\end{itemize}
\end{proof}

\begin{theorem}
The structure $\left(\mathcal{M}^{\Sigma'},+,\cdotp\right)$ is a $\Sigma'$-module.
\end{theorem}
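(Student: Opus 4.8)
The plan is to recognize this statement as the synthesis of everything established so far, so that almost no new work remains. By definition, a $\Sigma'$-module is an abelian group equipped with a scalar action by the ring $\Sigma'$ satisfying the four standard axioms; I would therefore proceed by assembling the pieces in three stages. First, I would invoke Theorem~\ref{3.10} together with the subsequent remark to record that $(\Sigma',\tilde{l},\tilde{\vee})$ is a commutative ring in which $e^{\vee}$ plays the role of multiplicative identity, so that $\Sigma'$ is a legitimate ring of scalars. Second, I would invoke Theorem~\ref{3.13} to record that $(\mathcal{M}^{\Sigma'},+)$ is an abelian group, which is precisely the underlying additive structure the module requires.

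The third and central stage is to observe that the four module axioms are exactly the four properties collected in Lemma~\ref{3.16}, once one keeps straight which operation of the ring plays which role. Since the ring multiplication on $\Sigma'$ is $\tilde{\vee}$ and the ring addition is $\tilde{l}$, the associativity-of-scalars axiom $(rs)\cdot R = r\cdot(s\cdot R)$ is precisely property~(1), namely $(\theta\vee\beta)\cdot R = \theta\cdot(\beta\cdot R)$; the unit axiom $1\cdot R = R$ is property~(2), namely $e^{\vee}\cdot R = R$; distributivity of scalars over module addition is property~(3); and distributivity over ring addition $(r+s)\cdot R = r\cdot R + s\cdot R$ is property~(4), namely $(\alpha l \beta)\cdot R = \alpha\cdot R + \beta\cdot R$. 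Thus the entire verification reduces to the already-proved lemma, and the theorem follows by simply citing these three results in order.

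I would also make sure no loose ends are hiding behind the clean bookkeeping. The one genuine point to confirm is that the scalar action $\cdot:\Sigma'\times\mathcal{M}^{\Sigma'}\to\mathcal{M}^{\Sigma'}$ is well defined with codomain inside $\mathcal{M}^{\Sigma'}$; this is exactly the content of the remark immediately following the definition of the scalar product, so I would cite it rather than reprove it. A second delicate point is the status of $e^{\vee}$: the ring of Theorem~\ref{3.10} need not contain a multiplicative identity on the nose, and one obtains $e^{\vee}$ only after passing to the unital extension guaranteed by the associated remark. I expect the main (and only) obstacle to be purely conceptual, namely making sure this embedding is compatible with the scalar action so that the symbol $e^{\vee}$ appearing in property~(2) really is the identity of the ring over which the module is taken; granting that compatibility, no computation remains and the proof closes immediately.
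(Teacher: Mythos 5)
Your proposal is correct and follows exactly the same route as the paper, whose entire proof is the citation of Theorem~\ref{3.10}, Theorem~\ref{3.13} and Lemma~\ref{3.16}. Your additional care in matching each module axiom to the corresponding item of Lemma~\ref{3.16} and in flagging the unital-extension issue for $e^{\vee}$ only makes explicit what the paper leaves implicit.
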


\begin{proof}
 It follows from theorems \ref{3.10}, \ref{3.13} and lemma \ref{3.16}.
\end{proof}

\begin{theorem}
It is always possible to build a module over the maximal consistent extension of a consistent non empty set of propositions, which elements are proofs on such a extension.
\end{theorem}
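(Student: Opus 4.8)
The plan is to recognize that this final statement is an existence result that packages together the machinery of the preceding theorems. Its content reduces to two ingredients: first, guaranteeing that a maximal consistent extension $\Sigma'$ of an arbitrary consistent non-empty $\Sigma \subseteq \F'_{(\infty)}$ actually exists; and second, invoking the immediately preceding theorem, which already establishes that $\left(\mathcal{M}^{\Sigma'}, +, \cdotp\right)$ is a $\Sigma'$-module once such a $\Sigma'$ is at hand.

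First I would construct the maximal consistent extension by Lindenbaum's Lemma. Since the alphabet $\mathcal{A}$ is enumerable, the set of formulas $\F_{(\infty)}$ and therefore the Lindenbaum algebra $\F'_{(\infty)}$ are countable, so I can fix an enumeration $\gamma_0, \gamma_1, \gamma_2, \ldots$ of $\F'_{(\infty)}$ and build the extension explicitly. Setting $\Sigma_0 := \Sigma$ and, at each stage,
\[
\Sigma_{n+1} :=
\begin{cases}
\Sigma_n \cup \{\gamma_n\} & \text{if this set is consistent,}\\
\Sigma_n \cup \{\neg\gamma_n\} & \text{otherwise,}
\end{cases}
\]
I would take $\Sigma' := \bigcup_{n} \Sigma_n$. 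A routine inductive argument then shows that each $\Sigma_n$ remains consistent, that $\Sigma'$ is maximal in the sense that it decides every formula of $\F'_{(\infty)}$, and that $\Sigma'$ is closed under deduction — precisely the property invoked earlier to make $\phi_{in}$ a legitimate interpretation and hence $\mathcal{M}^{\Sigma'}$ well defined.

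With $\Sigma'$ secured, the remaining work is already in place: Theorem \ref{3.10} furnishes the commutative ring $\left(\Sigma', \tilde{l}, \tilde{\vee}\right)$, Theorem \ref{3.13} gives the abelian group $\left(\mathcal{M}^{\Sigma'}, +\right)$, and Lemma \ref{3.16} verifies the four axioms relating the scalar product to the ring and group operations. The preceding theorem combines these into the statement that $\left(\mathcal{M}^{\Sigma'}, +, \cdotp\right)$ is a $\Sigma'$-module; since its elements are by construction proofs $R_n^{\langle \beta_i, \phi(i)\rangle^n}$ over the extension $\Sigma'$, the claim follows.

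The hard part will be the existence of the maximal consistent extension. In the fully general, possibly uncountable setting one would need to appeal to Zorn's Lemma, but under the standing hypothesis that $\mathcal{A}$ is enumerable the countable construction above is entirely constructive and avoids any choice principle. The delicate point to verify is that consistency is genuinely preserved at each stage — that at least one of $\gamma_n$, $\neg\gamma_n$ can be consistently adjoined to $\Sigma_n$ — since this is exactly what guarantees that the limiting set is simultaneously consistent and complete, closing the only real gap between the hypotheses and the conclusion.
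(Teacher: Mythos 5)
Your proposal is correct and follows essentially the same route as the paper, whose own proof consists of the single observation that the statement follows from the previous results. The only addition is your explicit Lindenbaum-style construction of the maximal consistent extension $\Sigma'$, a point the paper leaves implicit when it first introduces $\Sigma'$; this fills in a detail rather than changing the argument.
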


\begin{proof}
 It follows from previous results.
\end{proof}
 
\section{ANNEXES}
\subsection{Subproofs processing}
A very desirable characteristic of these proofs, which is inherited from the very definition of deduction, is that it is possible to easily extract the subproofs used in it. An algorithmic proceeding for replacing, eliminate or extract subproofs is presented next.

This example is the one of a replacement of a subproof, the rest can be performed in an analogues way.

On this purpose we will make use of the next notation:
\[\sigma \in_{(\mathcal{L}_j)}^{q} R \Leftrightarrow \exists \mathcal{L}_1,\ldots,\mathcal{L}_{q-1} \mid \sigma \in \mathcal{L}_1 \in \cdots \in \mathcal{L}_{q-1} \in R.\]
Now, let $R_h, R_k$ be elements from $\mathcal{K}_i^{\Sigma'}$ and:
\[\sigma \in_{(\mathcal{L}'_j)}^{q} R_h \wedge \sigma \in_{(\mathcal{L}_i)}^{l} R_k \]
and suppose that:
\[\mathcal{L}_1 \neq \left\{ \sigma, \left\{\emptyset \right\}\right\}.\]
We will make the next construction: 
\[\mathcal{L}''_{2} \in \mathcal{L}''_{3} \in \cdots \in \mathcal{L}''_{q-1},\]
where
\[\mathcal{L}''_{2}=\left( \mathcal{L}'_{2}-\mathcal{L}'_1\right) \cup \left\{\sigma, \bigcup_{i=1}^m \zeta_i \mid \zeta_i \in \mathcal{L}_1^i \wedge \zeta_i \notin \F'_{(\infty)}\right\}\]
and
\[\mathcal{L}''_i:= \left(\mathcal{L}'_i-\mathcal{L}'_{i-1}\right) \cup \mathcal{L}''_{i-1}.\]
The chain $\left( \mathcal{L}''_i\right)$ must replace $\left( \mathcal{L}'_i\right)$ in the proof $R_h$ obtaining $R'_h$ as the desired proof.

\subsection{Proposal for proving the existence of a neutral element for disjunction}
Let
\[\mathcal{V}= \F'_{(\infty)} \cup K_1 \cup K_2,\]
where
\[K_1 := \left\{\bigwedge_{i=1}^\infty \bigvee_{j=1}^\infty \phi_{ij} \mid \phi_{ij} \in \F'_{(\infty)}\right\}\]
and
\[K_2 := \left\{\bigvee_{i=1}^\infty \bigwedge_{j=1}^\infty \phi_{ij} \mid \phi_{ij} \in \F'_{(\infty)}\right\}.\]
We emphasize on the fact that the truth values of $K_1$ y $K_2$ are completely defined since:
\[\quad \hat{v}\left(\bigvee_{j=1}^{\infty} \phi_i\right)= 
  \left\{
   \begin{aligned}
    1 \textup{ si } \exists \phi_h \mid \hat{v}(\phi_h)=1 \\
    0 \textup{ si } \left(\hat{v}(\phi_1),\hat{v}(\phi_2),\ldots\right)=(0,0,\ldots)
   \end{aligned}
   \right.\]
and, consequently
\[\quad \hat{v}\left(\bigwedge_{j=1}^{\infty} \phi_i\right)= 
  \left\{
   \begin{aligned}
    1 \textup{ si } \left(\hat{v}(\phi_1),\hat{v}(\phi_2),\ldots\right)=(1,1,\ldots)\\
    0 \textup{ si } \exists \phi_h \mid \hat{v}(\phi_h)=0
   \end{aligned}
   \right.\]
Note that, if we have $\phi_i=\left[\psi_{j}\right]$ the following proposition is not necessarily true
\[\bigwedge_{i=1}^{\infty} \phi_i=\left[\bigwedge_{i=1}^{\infty} \psi_i\right].\]
We need to clarify that it is also incorrect to expect in general that:
\[\alpha \vdash \beta \wedge \beta \vdash \alpha \Leftrightarrow \alpha \equiv \beta.\]
With the introduction of these kind of formulas it is possible to consider $\alpha_1 \wedge \neg \alpha_1 \wedge \alpha_2 \wedge \neg \alpha_2 \ldots$ as a neutral 
for disjunction, where we are assuming in addition that this formula contains all the elements from $\F'_{(\infty)}$.

\end{document}